\renewcommand{\le}{\leqslant}
\renewcommand{\ge}{\geqslant}
\newtheorem{thL}{\indent Theorem}
\newtheorem{propos}{\indent Proposition}
\newtheorem{theorem}{\indent Theorem}
\newtheorem*{lemma}{\indent Lemma}
\theoremstyle{definition}
\newtheorem{remark}{\indent Remark}
\title[H\"{o}lder and Minkowski Type Inequalities with Alternating Signs]{H\"{o}lder and Minkowski Type Inequalities\\with Alternating Signs}
\author{Petr Chunaev}
\date{\today}
\keywords{Inequalities with alternating signs, H\"{o}lder and Minkowski type inequalities}
\subjclass{26D15} 
\begin{document}
\maketitle
\begin{abstract}
We obtain new inequalities with alternating signs of H\"{o}lder and Minkowski type.
\end{abstract}

\section{Introduction}

Most classical inequalities are essentially concerned with positive terms. On the other hand, in different branches of analysis there is necessity to deal with sums and series with alternating signs. The main goal of this work is to obtain inequalities of H\"{o}lder and Minkowski type for such sums and series.

Let us start with formulating several known results. The first inequality with alternating signs was due to G. Szeg\H{o}.

\begin{thL}[Szeg\H{o} (1950) \cite{Szego}]
Let $0\le b_{2n+1}\le b_{2n}\le \cdots \le b_2 \le b_1$ and a~function $f=f(x)$ be convex on $[0;b_1]$, then
\begin{equation}
\label{Sz1}
f\left(\sum_{k=1}^{2n+1}(-1)^{k+1}b_k\right)\le \sum_{k=1}^{2n+1}(-1)^{k+1}f(b_k).
\end{equation}
\label{tA}
\end{thL}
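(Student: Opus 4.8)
The plan is to argue by induction on $n$, peeling off the last pair of terms $-b_{2n}+b_{2n+1}$ at each step and reducing everything to a single two-point convexity inequality. Write $S_m=\sum_{k=1}^{2m+1}(-1)^{k+1}b_k$ for the partial alternating sums, so that $S_m=S_{m-1}-b_{2m}+b_{2m+1}$. The base case $n=0$ is the trivial $f(b_1)\le f(b_1)$. For the inductive step it suffices to prove the single-step estimate
\[
f(S_n)+f(b_{2n})\le f(S_{n-1})+f(b_{2n+1}),
\]
since combining it with the inductive hypothesis $f(S_{n-1})\le\sum_{k=1}^{2n-1}(-1)^{k+1}f(b_k)$ immediately yields the claim for $n$: the two new terms $-f(b_{2n})+f(b_{2n+1})$ carry exactly the signs $(-1)^{2n+1}$ and $(-1)^{2n+2}$ required by the statement.

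The engine is the elementary fact that for convex $f$ and $u\le x,y\le v$ with $x+y=u+v$ one has $f(x)+f(y)\le f(u)+f(v)$; this follows by writing $x=(1-t)u+tv$ and $y=tu+(1-t)v$ for a suitable $t\in[0,1]$ and adding the two Jensen inequalities. I would invoke it with $u=b_{2n+1}$, $v=S_{n-1}$ and $\{x,y\}=\{S_n,b_{2n}\}$. The equal-sum hypothesis is immediate from $S_n+b_{2n}=S_{n-1}+b_{2n+1}$, so the whole matter comes down to verifying the containment $b_{2n+1}\le S_n,b_{2n}\le S_{n-1}$.

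The containment is where the monotonicity hypothesis $0\le b_{2n+1}\le\cdots\le b_1$ does its work, and it is the one step needing care. First I would record the general bound $b_{2m+1}\le S_m\le b_1$, obtained by grouping $S_m$ as $b_1-(b_2-b_3)-\cdots-(b_{2m}-b_{2m+1})$ for the upper bound and as $(b_1-b_2)+\cdots+(b_{2m-1}-b_{2m})+b_{2m+1}$ for the lower bound, every parenthesis being nonnegative. In particular $S_{n-1}\ge b_{2n-1}\ge b_{2n}\ge b_{2n+1}$, which gives $b_{2n}\le S_{n-1}$ and $b_{2n+1}\le S_{n-1}$; moreover $S_n-b_{2n+1}=S_{n-1}-b_{2n}\ge 0$ and $S_{n-1}-S_n=b_{2n}-b_{2n+1}\ge 0$, so that $b_{2n+1}\le S_n\le S_{n-1}$. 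Thus all four arguments of $f$ lie in $[b_{2n+1},S_{n-1}]\subseteq[0,b_1]$, where $f$ is convex, and the two-point inequality applies.

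The only genuine obstacle is the bookkeeping in this last paragraph: one must check that the two \emph{central} values $S_n,b_{2n}$ are really trapped between the two \emph{extreme} values $b_{2n+1}$ and $S_{n-1}$ inherited from the previous sum, and that every point stays inside the domain of convexity. Everything else is a mechanical telescoping of the single-step estimate.
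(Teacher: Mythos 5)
Your proof is correct, and it takes a genuinely different route from the paper --- indeed, the paper contains no proof of Theorem~\ref{tA} at all: the result is quoted from Szeg\H{o} \cite{Szego}, and the introduction merely records (following \cite{Bellman1959}) that it is a special case of the majorization theorem \cite[Theorem~108]{HLP}. Your induction replaces that citation with a self-contained argument, and your engine --- the two-point inequality $f(x)+f(y)\le f(u)+f(v)$ for $u\le x,y\le v$ with $x+y=u+v$ --- is exactly the elementary ``single transfer'' step from which Karamata's majorization theorem is usually assembled. In effect you have unfolded the majorization route into $n$ one-step exchanges: at stage $n$ the passage from $\{S_{n-1},b_{2n+1}\}$ to $\{S_n,b_{2n}\}$ preserves the sum and moves both points inward, so you never need to verify the global majorization relation between $(b_1,b_3,\ldots,b_{2n+1})$ and the decreasingly sorted $(S_n,b_2,b_4,\ldots,b_{2n})$, which is the less transparent part of the cited route. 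Your bookkeeping is complete and correct: the pairing bounds $b_{2m+1}\le S_m\le b_1$ give $b_{2n},b_{2n+1}\le S_{n-1}$, the identities $S_n-b_{2n+1}=S_{n-1}-b_{2n}\ge 0$ and $S_{n-1}-S_n=b_{2n}-b_{2n+1}\ge 0$ trap $S_n$ as required, and all four arguments of $f$ stay in $[b_{2n+1};S_{n-1}]\subseteq[0;b_1]$ (the degenerate case $S_{n-1}=b_{2n+1}$, where all four points coincide and your parametrization by $t$ breaks down, is trivially an equality). What your approach buys is elementarity: everything reduces to Jensen plus the telescoping identity $S_n+b_{2n}=S_{n-1}+b_{2n+1}$. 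What the paper's route buys is generality: the majorization theorem yields Theorem~\ref{tA} uniformly alongside its refinements, and the companion Steffensen route subsumes the weighted Theorem~\ref{tD} of Brunk--Olkin, whereas your induction is tailored to the unweighted alternating sum and would need a new idea to absorb the weights $w_k$.
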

Later, R.~Bellman using a simple geometrical method proved (\ref{Sz1}), where $2n+1$ was replaced by $2n$ and convex $f$ was such that $f(0)\le 0$ (see \cite{Bellman1953} and \cite{Wright}). However, this result has been already contained in Szeg\H{o}'s theorem. Indeed, it is sufficient to put $a_{2n+1}=0$ in (\ref{Sz1}) and take into account that $f(0)$ in the right hand side is non-positive.
In \cite{Weinberger}, H.~F.~Weinberger independently obtained  a particular case of Theorem~\ref{tA} which can be derived from (\ref{Sz1}) by putting $f(x)=x^p$, $p\ge 1$. Finally, H.~D.~Brunk and I.~Olkin using different technics proved a weighted version of (\ref{Sz1}).

\begin{thL}[Brunk (1956) \cite{Brunk}, Olkin (1959) \cite{Olkin}] Let
$$
0\le w_n\le w_{n-1}\le \ldots\le w_1\le 1, \qquad 0\le b_{n}\le b_{n-1}\le \cdots \le b_2 \le b_1,
$$
and a function $f=f(x)$ be convex on $[0;b_1]$, then
$$
f\left(\sum_{k=1}^{n}(-1)^{k+1}w_k b_k\right)\le \left(1-\sum_{k=1}^{n}(-1)^{k+1}w_k\right)f(0)+\sum_{k=1}^{n}(-1)^{k+1}w_kf(b_k).
$$
\label{tD}
\end{thL}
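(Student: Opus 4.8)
The plan is to reduce the weighted inequality to the unweighted case already governed by Szeg\H{o}'s Theorem~\ref{tA}, by means of a layer-cake (Abel-type) representation of the monotone weights. Since $0\le w_k\le 1$, I can write each weight as $w_k=\int_0^1 \mathbf 1[w_k\ge s]\,ds$; and because the $w_k$ are non-increasing, the set $\{k:w_k\ge s\}$ is, for every $s$, an initial segment $\{1,\dots,N(s)\}$, where $N(s)=\#\{k:w_k\ge s\}$ is a non-increasing step function taking values in $\{0,1,\dots,n\}$. Introducing the unweighted alternating partial sums
\begin{equation*}
B_N=\sum_{k=1}^{N}(-1)^{k+1}b_k,\qquad E_N=\sum_{k=1}^{N}(-1)^{k+1},\qquad F_N=\sum_{k=1}^{N}(-1)^{k+1}f(b_k),
\end{equation*}
interchanging sum and integral then yields $\sum_{k=1}^n(-1)^{k+1}w_kb_k=\int_0^1 B_{N(s)}\,ds$, together with $\sum_{k=1}^n(-1)^{k+1}w_k=\int_0^1 E_{N(s)}\,ds$ and $\sum_{k=1}^n(-1)^{k+1}w_kf(b_k)=\int_0^1 F_{N(s)}\,ds$. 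Here the hypothesis $w_1\le 1$ is exactly what makes the layer-cake integral reproduce $w_k$ rather than $\min(w_k,1)$.

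With $S:=\sum_{k=1}^n(-1)^{k+1}w_kb_k=\int_0^1 B_{N(s)}\,ds$, I would first record that each $B_N$ lies in $[0,b_1]$ (by the usual pairing of consecutive terms of a non-increasing non-negative sequence), so that $S\in[0,b_1]$ and $f(S)$ is defined. Treating $ds$ as a probability measure on $[0,1]$ and using the convexity of $f$, Jensen's inequality gives
\begin{equation*}
f(S)=f\!\left(\int_0^1 B_{N(s)}\,ds\right)\le \int_0^1 f\big(B_{N(s)}\big)\,ds.
\end{equation*}
Comparing with the representation of the right-hand side (and writing $f(0)=\int_0^1 f(0)\,ds$), it therefore suffices to prove, for every fixed $N\in\{0,1,\dots,n\}$, the \emph{unweighted} inequality
\begin{equation*}
f(B_N)\le \big(1-E_N\big)f(0)+F_N .
\end{equation*}

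The final step dispatches this base inequality via Theorem~\ref{tA}. When $N$ is odd, $E_N=1$ and the claim $f(B_N)\le F_N$ is precisely Szeg\H{o}'s inequality \eqref{Sz1} applied to $b_1\ge\cdots\ge b_N$. When $N$ is even, $E_N=0$ and the claim reads $f(B_N)\le f(0)+F_N$; this follows by applying \eqref{Sz1} to the augmented sequence $b_1\ge\cdots\ge b_N\ge 0$ of odd length $N+1$, exactly the device described after Theorem~\ref{tA} (the appended zero leaves $B_N$ unchanged on the left and contributes $f(0)$ on the right with a $+$ sign, since $N$ is even). Integrating the base inequality in $s$ against $ds$ and combining with Jensen then closes the argument. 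The step I expect to demand the most care is the bookkeeping in the layer-cake identity, namely using the initial-segment structure of $\{k:w_k\ge s\}$ correctly so that all three weighted sums genuinely collapse to integrals of $B_{N(s)},E_{N(s)},F_{N(s)}$, together with the domain check $B_N\in[0,b_1]$ needed for Jensen; once these are in place, convexity and Theorem~\ref{tA} do the rest.
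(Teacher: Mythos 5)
Your proof is correct, and a direct comparison is somewhat moot: the paper contains no proof of Theorem~\ref{tD} at all --- it is quoted from Brunk and Olkin, with the only methodological remark being that (following Bellman) it is a corollary of Steffensen's inequality. Your route instead derives Theorem~\ref{tD} from Theorem~\ref{tA}, making it self-contained given material already in the paper, and every step checks out: the layer-cake identity $w_k=\int_0^1\mathbf{1}[w_k\ge s]\,ds$ uses $0\le w_k\le w_1\le 1$ exactly as you say; monotonicity of the weights gives $\{k:w_k\ge s\}=\{1,\dots,N(s)\}$; the bound $B_N\in[0,b_1]$ follows by pairing consecutive terms; and the even case of the base inequality (including $N=0$) is indeed \eqref{Sz1} applied to the augmented odd-length sequence $b_1\ge\cdots\ge b_N\ge 0$, the appended zero carrying sign $(-1)^{N+2}=+1$. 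One point worth making explicit: since $N(s)$ is a step function with at most $n+1$ values, $B_{N(s)}$ is a simple function, so your ``integral Jensen'' is really just the finite Jensen inequality $f\bigl(\sum_i\lambda_i x_i\bigr)\le\sum_i\lambda_i f(x_i)$, and no measurability or integrability hypotheses on $f$ beyond convexity on $[0;b_1]$ are needed. Equivalently, your construction is Abel summation in disguise: setting $w_{n+1}:=0$, one has $\sum_{k=1}^n(-1)^{k+1}w_kb_k=\sum_{N=1}^{n}(w_N-w_{N+1})B_N$, and the nonnegative coefficients $w_N-w_{N+1}$ together with the mass $1-w_1$ placed at $B_0=0$ form precisely the convex combination to which finite Jensen applies --- phrasing it this way eliminates the integral bookkeeping you flagged as the delicate step. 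Compared with the Steffensen-based route the paper alludes to, your argument buys elementarity (only Theorem~\ref{tA} plus convexity), at the modest cost of presupposing the unweighted Szeg\H{o} inequality rather than obtaining the weighted and unweighted cases from a single principle.
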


Note that Theorems \ref{tA} and \ref{tD} are of Jensen type. It was shown in \cite{Bellman1959} that these results were particular cases of more general statements. Namely, Theorem~\ref{tA} follows from the majorization theorem \cite[Theorem~108]{HLP}, and Theorem \ref{tD} is a corollary of Steffensen's inequality (see, e.g., \cite{Steffensen}). Analogues of other classical inequalities for sums with alternating signs were considered by M.~Biernacki in \cite{Biernacki}. He showed e.g. that Chebyshev's sum inequality remains valid for such sums.
Later on, some refinements of Theorems~\ref{tA} and \ref{tD} were obtained in \cite{Bjelica}, \cite{Pecaric3}, and \cite{Pecaric2}.

Note that inequalities with alternating signs have numerous applications. For instance, G. Szeg\H{o} proved Theorem~\ref{tA} for purposes in generalization of Dirichlet integrals, the result of H.~F.~Weinberger was motivated by certain problems in symmetrization theory, etc.
Close connection between sums with alternating signs and estimates of trigonometrical integrals was observed by J. F. Steffensen \cite{Steffensen}.

\medskip

Now we give some notations and auxiliaries. In what follows, we denote non-negative sequences of real numbers in bold print, e.g. $\mathbf{a}=\{a_k\}_{k=1}^{n}$ or $\mathbf{b}=\{b_k\}_{k=1}^{n}$, where $n$ is a positive integer or infinity (usually we omit number of elements). Sometimes properties of the sequences can be specified.
Expressions like $\mathbf{a}\equiv 1$ mean that all elements of $\mathbf{a}$ are equal to $1$.
From now on, we exclude cases of sequences such that denominators in inequalities for them vanish.

Further, let us recall some well-known inequalities for $\alpha,\beta\ge0$, which we use:
\begin{eqnarray}
\label{Jensen}    &(\alpha+\beta)^p\le \; 2^{p-1} (\alpha^p+\beta^p),\quad p\ge 1 &(\text{Jensen's inequality}); \\
\label{JensenRev} &(\alpha+\beta)^p\ge \; 2^{p-1} (\alpha^p+\beta^p),\quad 0<p<1  &(\text{reverse Jensen's inequality}); \\
\label{Young}     &\alpha \beta\le \frac{\alpha^p}{p}+\frac{\beta^q}{q},\quad  \frac{1}{p}+\frac{1}{q}=1,\quad p\ge 1&(\text{Young's inequality});\\
\label{ineq+-} &p\,\beta^{p-1}(\alpha-\beta)\le \alpha^p-\beta^p\le p\,\alpha^{p-1}(\alpha-\beta),&p\ge 1,\quad \alpha\ge \beta;\\
\label{ineqS} &\alpha^p+\beta^p\le (\alpha+\beta)^p,\quad & p\ge 1.
 \end{eqnarray}
The inequality~(\ref{ineq+-}) can be obtained through dividing both parts by $\alpha-\beta$; we also refer the reader to \cite[Theorem 41]{HLP}. The following result will be needed in Section~\ref{Sec}.
\begin{lemma}
\label{lemmaM}
Let $\mathbf{a}$ be non-increasing, $\mathbf{b}$ be non-decreasing and such that $b_k\le B$ for $k=1,\ldots,n$. Then
$$
\sum_{k=1}^n (-1)^{k+1}a_k b_k\le B \sum_{k=1}^n (-1)^{k+1}a_k.
$$
\end{lemma}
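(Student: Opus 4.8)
The plan is to reduce the two-sided statement to the non-negativity of a single alternating sum and then to exploit the monotonicity of the sequences. First I would transpose everything to one side and write the difference as
$$
B \sum_{k=1}^n (-1)^{k+1} a_k - \sum_{k=1}^n (-1)^{k+1} a_k b_k = \sum_{k=1}^n (-1)^{k+1} a_k (B - b_k).
$$
Introducing $c_k := B - b_k$, the hypotheses that $\mathbf{b}$ is non-decreasing and $b_k \le B$ translate into the sequence $\{c_k\}$ being non-negative and non-increasing. So the whole problem becomes the following: show that $\sum_{k=1}^n (-1)^{k+1} a_k c_k \ge 0$ whenever $\mathbf{a}$ and $\{c_k\}$ are both non-negative and non-increasing.

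The key observation is that the product $d_k := a_k c_k$ is itself non-negative and non-increasing. Indeed, from $a_k \ge a_{k+1} \ge 0$ and $c_k \ge c_{k+1} \ge 0$ one gets $a_k c_k \ge a_{k+1} c_k \ge a_{k+1} c_{k+1}$, where the first step uses $c_k \ge 0$ and the second uses $a_{k+1} \ge 0$. Thus it suffices to prove the purely one-sequence fact that the alternating sum of a non-negative non-increasing sequence is non-negative.

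That last fact I would handle by grouping consecutive terms in pairs: writing
$$
\sum_{k=1}^n (-1)^{k+1} d_k = (d_1 - d_2) + (d_3 - d_4) + \cdots,
$$
each parenthesis is non-negative because $\{d_k\}$ is non-increasing, and if $n$ is odd the leftover final term $d_n$ is non-negative as well. Hence the sum is $\ge 0$, and combining the three steps yields the claim.

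I do not expect a serious obstacle here; the only point requiring a moment's care is verifying that the product of two non-increasing non-negative sequences is again non-increasing (the monotonicity of the product can fail without the non-negativity assumption), together with making sure the parity bookkeeping in the pairing argument is handled correctly for both even and odd $n$.
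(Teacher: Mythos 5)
Your proof is correct and takes essentially the same route as the paper's: both transpose the inequality to write the difference as $\sum_{k=1}^n(-1)^{k+1}a_k(B-b_k)$ and conclude non-negativity from the fact that $\mathbf{a}$ and $\{B-b_k\}$ are non-negative and non-increasing. You simply spell out the two details the paper leaves implicit, namely that the termwise product of two non-negative non-increasing sequences is non-increasing and that the alternating sum of such a sequence is non-negative by pairing consecutive terms.
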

\begin{proof} Since the sequences $\mathbf{a}$ and $\{B-b_k\}$ are non-increasing, we have
$$
B \sum_{k=1}^n (-1)^{k+1}a_k-\sum_{k=1}^n (-1)^{k+1}a_k b_k=
\sum_{k=1}^n (-1)^{k+1}a_k(B-b_k)\ge 0.
$$
It is easily seen that equality holds e.g. if $\mathbf{b}\equiv B$.
\end{proof}

\section{H\"{o}lder type inequalities}
\label{Sec}
In this section, we show that there is no a direct analog of  H\"{o}lder's inequality in case with alternating signs, but
it is possible to obtain one of reverse H\"{o}lder's inequality. Note that reverse H\"{o}lder's inequalities for non-negative terms are well studied (see e.g. \cite{Zhuang} and references there in).

\begin{theorem} Let $\mathbf{a}$ and $\mathbf{b}$ be positive non-increasing and such that
$$
0<a\le a_k\le A <\infty, \qquad 0<b\le b_k\le B <\infty,\qquad k=1,\ldots,n,
$$
then for $p,q>1$, $\frac{1}{p}+\frac{1}{q}=1$, we have
\begin{equation}
0\le \frac{\left(\sum_{k=1}^n (-1)^{k+1}a_k^q\right)^{1/q}\left(\sum_{k=1}^n (-1)^{k+1}b_k^p\right)^{1/p}}{\sum_{k=1}^n (-1)^{k+1}a_k b_k}
\le C_{\mathbf{a},\mathbf{b}},
\label{BC}
\end{equation}
where $C_{\mathbf{a},\mathbf{b}}=A^{q-1}/b+B^{p-1}/a$ and $C_{\mathbf{a},\mathbf{b}}\in (1;\infty)$. The left hand side of~$(\ref{BC})$ should be read as for all even $n\ge 2$ there exists no positive constant depending on $a,A,b,B,p$ or $q$, and bounding the fraction in $(\ref{BC})$ from below.
\label{Holder1}
\end{theorem}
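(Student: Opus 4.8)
The plan is to treat the assertions in $(\ref{BC})$ in turn: first non-negativity of the fraction, then the upper bound $C_{\mathbf a,\mathbf b}$, then the location $C_{\mathbf a,\mathbf b}\in(1;\infty)$, and finally the reading of the left-hand side. Throughout I write $S_a=\sum_{k=1}^n(-1)^{k+1}a_k^q$, $S_b=\sum_{k=1}^n(-1)^{k+1}b_k^p$ and $S_{ab}=\sum_{k=1}^n(-1)^{k+1}a_kb_k$. The non-negativity is the easy part: since $\mathbf a$ and $\mathbf b$ are non-increasing, so are $\{a_k^q\}$ and $\{b_k^p\}$, and an alternating sum $\sum(-1)^{k+1}c_k$ of a non-increasing non-negative sequence is $\ge 0$ (group the terms in consecutive pairs $c_{2j-1}-c_{2j}\ge 0$, with a non-negative leftover $c_n$ when $n$ is odd). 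Hence $S_a,S_b\ge 0$; as $S_{ab}>0$ by the standing assumption that denominators do not vanish, the fraction is $\ge 0$.

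The upper bound is the heart of the matter and the step I expect to be most delicate. The obstruction is that, because of the alternating signs, one cannot simply sum the pointwise estimate $a_k^q\le (A^{q-1}/b)\,a_kb_k$; the monotonicity of the factors must be exploited globally. I would first apply Young's inequality $(\ref{Young})$ to the product of the two factors in the numerator, with $\alpha=S_b^{1/p}$ and $\beta=S_a^{1/q}$, to get
\begin{equation*}
S_a^{1/q}S_b^{1/p}\le \frac{S_a}{q}+\frac{S_b}{p}.
\end{equation*}
Next I would estimate $S_a$ by a first-power alternating sum: pairing consecutive terms and using $(\ref{ineq+-})$ with exponent $q$ and $\alpha=a_{2j-1}\ge\beta=a_{2j}$ gives $a_{2j-1}^q-a_{2j}^q\le q\,a_{2j-1}^{q-1}(a_{2j-1}-a_{2j})\le qA^{q-1}(a_{2j-1}-a_{2j})$, whence $S_a\le qA^{q-1}\sum_{k=1}^n(-1)^{k+1}a_k$ (the odd-$n$ leftover $a_n^q\le A^{q-1}a_n\le qA^{q-1}a_n$ is handled the same way). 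Finally the Lemma, applied to $\mathbf a$ and the non-decreasing sequence $\{B-b_k\}$ (bounded by $B-b$ since $b_k\ge b$), yields $S_{ab}\ge b\sum_{k=1}^n(-1)^{k+1}a_k$, i.e. $\sum_{k=1}^n(-1)^{k+1}a_k\le S_{ab}/b$. Combining, the factor $q$ cancels and $S_a/q\le (A^{q-1}/b)S_{ab}$; by the symmetric argument (with $\{A-a_k\}$) $S_b/p\le(B^{p-1}/a)S_{ab}$. Adding gives $S_a^{1/q}S_b^{1/p}\le C_{\mathbf a,\mathbf b}S_{ab}$, which is the upper bound. The crux is precisely this cancellation of $q$ from $(\ref{ineq+-})$ against $1/q$ from $(\ref{Young})$, which is what produces the clean additive constant.

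To place $C_{\mathbf a,\mathbf b}$ in $(1;\infty)$ note that finiteness is immediate from $a,b>0$ and $A,B<\infty$. For the lower bound $1$, since $A\ge a$, $B\ge b$ and $p-1,q-1>0$ we have $C_{\mathbf a,\mathbf b}\ge a^{q-1}/b+b^{p-1}/a=(a^q+b^p)/(ab)$, and Young's inequality $(\ref{Young})$ with $\alpha=b$, $\beta=a$ gives $ab\le b^p/p+a^q/q<a^q+b^p$, the last inequality being strict because $a,b>0$ and $p,q>1$. Hence $C_{\mathbf a,\mathbf b}>1$.

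It remains to justify the reading of the left-hand side. Here I would simply exhibit, for every even $n\ge 2$, admissible sequences making the fraction vanish: take $\mathbf a$ constant (say $a_k\equiv A$) and $\mathbf b$ strictly decreasing in $[b;B]$. Then $S_a=A^q(1-1+\cdots-1)=0$, so the numerator is $0$, while $S_{ab}=A\sum_{k=1}^n(-1)^{k+1}b_k>0$; thus the fraction equals $0$. Consequently no positive constant can bound it from below, which also shows that the forward H\"{o}lder inequality (where the fraction would be $\ge 1$) has no analogue with alternating signs.
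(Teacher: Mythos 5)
Your proof is correct and takes essentially the same route as the paper: Young's inequality (\ref{Young}) applied to the two factors of the numerator, pairing via (\ref{+-}) with the right-hand side of (\ref{ineq+-}) to reduce to the first-power alternating sums, the Lemma to compare those with $\sum_{k=1}^n(-1)^{k+1}a_kb_k$, the same strictness argument for $C_{\mathbf{a},\mathbf{b}}>1$, and an equivalent zero-numerator example (constant $\mathbf{a}$, even $n$) for the left-hand side. The only cosmetic difference is the final step: the paper rewrites $A^{q-1}\sum_{k=1}^n(-1)^{k+1}a_k+B^{p-1}\sum_{k=1}^n(-1)^{k+1}b_k$ as a single alternating sum with the non-decreasing coefficients $\left\{A^{q-1}/b_k+B^{p-1}/a_k\right\}$ and invokes the Lemma once, whereas you invoke it twice (with $\{B-b_k\}$ and $\{A-a_k\}$) to bound each first-power sum by $S_{ab}/b$ and $S_{ab}/a$ separately, arriving at the same constant $C_{\mathbf{a},\mathbf{b}}$.
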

\begin{proof} From now on, $F_{\text{H}}$ stands for the fraction in (\ref{BC}). The fact that there exist no positive constants bounding $F_{\text{H}}$ from below, can be shown by the following example. Let $n$ be even and $\mathbf{a}=\{a_1,a_1,a_3,a_3,\ldots,a_n,a_n,\ldots\}$ be positive and non-decreasing. The sequence $\mathbf{b}$ is arbitrary except such that ${b_{2k-1}-b_{2k}=0}$ for all $k=1,\ldots,n/2$. It follows that
$$
F_{\text{H}}
=\frac{0\cdot\left(\sum_{k=1}^n (-1)^{k+1}b_k^p\right)^{1/p}}{\sum_{k=1}^{n/2} a_{2k-1}( b_{2k-1}-b_{2k})}=0.
$$
Thus $F_{\text{H}}$ cannot be bounded from below by a positive absolute constant or a constant depending on $p$, $q$, maximum or minimum elements of $\mathbf{a}$ and $\mathbf{b}$.

Now we prove the right hand side of (\ref{BC}). Here $N_{\text{H}}$ denotes the numerator of~$F_{\text{H}}$. From (\ref{Young}) we have
$$
N_{\text{H}} \le \frac{1}{q}\sum_{k=1}^n (-1)^{k+1}a_k^q+\frac{1}{p}\sum_{k=1}^n (-1)^{k+1}b_k^p.
$$
Note that any sum with alternating signs can be written in the form
\begin{equation}
\label{+-}
\sum_{k=1}^n(-1)^{k+1}\alpha_k=\sum_{k=1}^{N}(\alpha_{2k-1}-\alpha_{2k}),
\end{equation}
where $N=n/2$ if $n$ is even and $N=(n+1)/2$  if $n$ is odd (we also assume that $\alpha_{n+1}=0$). Therefore from the right hand side of (\ref{ineq+-}) we obtain
\begin{equation*}
\begin{split}
N_{\text{H}}&\le\frac{1}{q}\sum_{k=1}^N (a_{2k-1}^q-a_{2k}^q)+\frac{1}{p}\sum_{k=1}^N (b_{2k-1}^{p}-b_{2k}^p)\\
   &\le \frac{1}{q}\sum_{k=1}^N q \,a_{2k-1}^{q-1}(a_{2k-1}-a_{2k})+\frac{1}{p}\sum_{k=1}^N p \,b_{2k-1}^{p-1}(b_{2k-1}-b_{2k})\\
   &\le A^{q-1}\sum_{k=1}^n (-1)^{k+1}a_k+B^{p-1}\sum_{k=1}^n (-1)^{k+1}b_k\\
   &=\sum_{k=1}^n (-1)^{k+1}\left(\frac{A^{q-1}}{b_k}+\frac{B^{p-1}}{a_k}\right)a_k b_k.
\end{split}
\end{equation*}
In the last expression, sequences $\{A^{q-1}/b_k +B^{p-1}/a_k\}$ and $\{a_k b_k\}$ are non-decreasing and non-increasing, correspondingly,  since $\mathbf{a}$ and $\mathbf{b}$ are non-increasing by the data. Hence by Lemma
$$
N_{\text{H}}\le \max_k\left\{\frac{A^{q-1}}{b_k}+\frac{B^{p-1}}{a_k}\right\}\sum_{k=1}^n (-1)^{k+1}a_k b_k\le
C_{\mathbf{a},\mathbf{b}}\sum_{k=1}^n (-1)^{k+1}a_k b_k.
$$

We conclude the proof by observing that $C_{\mathbf{a},\mathbf{b}}\in (1,\infty)$. Indeed, it is easily seen that $C_{\mathbf{a},\mathbf{b}}\to \infty$ as $a\to 0$ or $b\to 0$. We have $C_{\mathbf{a},\mathbf{b}}>1$, because,  on the one hand, from (\ref{Young}) taking into account that $a\le A$, $b\le B$ we have
$$
C_{\mathbf{a},\mathbf{b}}\ge \frac{a^{q-1}}{b}+\frac{b^{p-1}}{a}> \frac{a^q/q+b^p/p}{ab}\ge 1,
$$
and, on the other hand, $C_{\mathbf{a},\mathbf{b}}\to 1$ from above as $b\to \infty$ if $\mathbf{a}\equiv 1$ and $p$ is sufficiently close to~$1$.
\end{proof}
\begin{remark}
From Theorem~\ref{Holder1}, it is seen that $C_{\mathbf{a},\mathbf{b}}$ tends to infinity as $a\to 0$ or $b\to 0$.
Now we give an example of sequences that confirms it.  Following the notation of Theorem~\ref{Holder1}, we suppose that the number of terms in the sums is odd, $\mathbf{a}\equiv 1$ and $b=b_{2n+1}=0$ in $\mathbf{b}$. It gives
\begin{equation*}
\begin{split}
F_{\text{H}}&=\frac{\left(\sum_{k=1}^{2n+1} (-1)^{k+1}a_k^q\right)^{1/q}\left(\sum_{k=1}^{2n+1} (-1)^{k+1}b_k^p\right)^{1/p}}
{\sum_{k=1}^{2n+1} (-1)^{k+1}a_k b_k}\\
            &= \frac{\left(\sum_{k=1}^{2n} (-1)^{k+1}b_k^p\right)^{1/p}}
{\sum_{k=1}^{2n} (-1)^{k+1} b_k}.
\end{split}
\end{equation*}
From (\ref{+-}) and the left hand side of (\ref{ineq+-}) we deduce that
$$
F_{\text{H}}=\frac{\left(\sum_{k=1}^{n} (b_{2k-1}^p-b_{2k}^p)\right)^{1/p}}
{\sum_{k=1}^{n} (b_{2k-1}-b_{2k})}\ge
p^{1/p}\left(\frac{b_{2n}}{\sum_{k=1}^{n}(b_{2k-1}-b_{2k})}\right)^{1-1/p},
$$
where the power is positive since $p>1$ and, therefore, for a fixed positive $b_{2n}$  the sum in the denominator can be made sufficiently small due to an appropriate choice of the sequence $\mathbf{b}$. Consequently, $F_{\text{H}}$ can be arbitrarily large.

On the other hand, in some particular cases $F_{\text{H}}$ can be bounded from above by an absolute constant even if $\mathbf{a}$ and $\mathbf{b}$ are decreasing to zero as $n\to\infty$. For instance, for harmonic series we have:
\begin{equation}
\label{harm1}
\left(\sum_{k=1}^\infty \frac{(-1)^{k+1}}{k^{q\alpha}}\right)^{1/q} \left(\sum_{k=1}^\infty \frac{(-1)^{k+1}}{k^{p\beta}}\right)^{1/p}\le \sum_{k=1}^\infty \frac{(-1)^{k+1}}{k^{\alpha+\beta}}, \qquad \alpha>0,\quad \beta>0.
\end{equation}
Indeed, since $\sum_{k=1}^\infty (-1)^{k+1}k^{-s}=(1-2^{1-s})\zeta(s)$ for $s>0$, in order to prove (\ref{harm1}) it is sufficient to note that the function
$$
F(\alpha,\beta)=\frac{\left((1-2^{1-q\alpha})\zeta(q\alpha)\right)^{1/q}\left((1-2^{1-p\beta})\zeta(p\beta)\right)^{1/p}}{(1-2^{1-(\alpha+\beta)})\zeta(\alpha+\beta)}
$$
has a maximum at $q\alpha=p\beta$ and moreover $\max F(\alpha,\beta)=1$. The following inequality for geometric series holds:
\begin{equation}
\label{geom1}
\left(\sum_{k=1}^\infty \frac{(-1)^{k+1}}{a^{qk}}\right)^{1/q} \left(\sum_{k=1}^\infty \frac{(-1)^{k+1}}{b^{pk}}\right)^{1/p}\le \sum_{k=1}^\infty \frac{(-1)^{k+1}}{(ab)^{k}}, \qquad a>1, \quad b>1.
\end{equation}
Certainly, the left and right hand sides of the inequality equal ${(1+a^q)^{-\frac{1}{q}}(1+b^p)^{-\frac{1}{p}}}$ and $\left(1+ab\right)^{-1}$ respectively. Consequently, we have $1+ab\le (1+a^q)^{\frac{1}{q}}(1+b^p)^{\frac{1}{p}}$, which is true by H\"{o}lder's inequality.
\end{remark}

\medskip

It is clear that if $p=q=2$ then the constant $C_{\mathbf{a},\mathbf{b}}$ from Theorem~\ref{Holder1} is equal to $A/b+B/a\ge a/b+b/a \ge 2$. Now we obtain a more precise constant for the case when the sequences $\mathbf{a}$ and $\mathbf{b}$ satisfy some additional conditions.

\begin{propos}
\label{Cauchy1}
Under the assumptions on the sequences $\mathbf{a}$ and $\mathbf{b}$ of Theorem~\ref{Holder1}, if moreover the sequence $\{a_k/b_k\}$ is monotone,
\begin{equation}
\label{CBS2}
0\le \frac{\sum_{k=1}^n(-1)^{k+1}a_k^2 \sum_{k=1}^n(-1)^{k+1}b_k^2}{\left(\sum_{k=1}^n(-1)^{k+1}a_kb_k\right)^2}\le c^2_{\mathbf{a},\mathbf{b}},
\end{equation}
where $c_{\mathbf{a},\mathbf{b}}=\frac{1}{2}\max\{A/a+a/A; B/b+b/B\}$ and $c_{\mathbf{a},\mathbf{b}}\in [1;\infty)$. The left hand side of~$(\ref{CBS2})$ should be read as for all even $n\ge 2$ there exists no positive constant depending on $a,A,b$ or~$B$, and bounding the fraction in $(\ref{CBS2})$ from below.
\end{propos}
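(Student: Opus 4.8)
The plan is to settle the lower bound by an explicit example and then to prove the upper bound by an averaging device that replaces the termwise Cassels argument, which collapses under alternating signs, with a single use of the Lemma. For the left-hand side, I would fix an even $n$, take $\mathbf a$ constant, say $a_k\equiv\gamma$ with $\gamma\in[a,A]$, and let $\mathbf b$ be any positive non-increasing sequence with $b_{2k-1}>b_{2k}$ for at least one $k$; then $a_k/b_k=\gamma/b_k$ is non-decreasing, so the monotonicity hypothesis holds, the denominator $\sum_{k=1}^n(-1)^{k+1}a_kb_k=\gamma\sum_{k=1}^n(-1)^{k+1}b_k$ is strictly positive, while $\sum_{k=1}^n(-1)^{k+1}a_k^2=\gamma^2\sum_{k=1}^n(-1)^{k+1}=0$ for even $n$. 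Hence the fraction vanishes and no positive constant depending on $a,A,b,B$ can bound it from below. That $c_{\mathbf a,\mathbf b}\in[1;\infty)$ is immediate, since $A/a+a/A\ge2$ and $B/b+b/B\ge2$ give $c_{\mathbf a,\mathbf b}\ge1$ (the value $1$ being attained for constant sequences and odd $n$), while finiteness is clear and $c_{\mathbf a,\mathbf b}\to\infty$ as $a\to0$ or $b\to0$.

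For the upper bound, set $\rho_k=a_k/b_k$, $m=\min_k\rho_k$, $M=\max_k\rho_k$, and write $S_a=\sum_{k=1}^n(-1)^{k+1}a_k^2$, $S_b=\sum_{k=1}^n(-1)^{k+1}b_k^2$, $S_{ab}=\sum_{k=1}^n(-1)^{k+1}a_kb_k$; all three are non-negative because $a_k^2,b_k^2,a_kb_k$ are non-increasing. The fraction $S_aS_b/S_{ab}^2$ is symmetric in $\mathbf a,\mathbf b$, and interchanging them sends $\rho_k\mapsto\rho_k^{-1}$ and $(a,A)\leftrightarrow(b,B)$, so I may assume $\{\rho_k\}$ non-decreasing and target the constant $\tfrac12(B/b+b/B)$; the opposite monotonicity then yields $\tfrac12(A/a+a/A)$, and $c_{\mathbf a,\mathbf b}$ is their maximum. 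Applying Young's inequality (\ref{Young}) with $p=q=2$ to $\sqrt{\lambda S_a}$ and $\sqrt{S_b/\lambda}$ gives, for every $\lambda>0$,
\[
\sqrt{S_aS_b}\le \tfrac12\left(\lambda S_a+\lambda^{-1}S_b\right)=\tfrac12\sum_{k=1}^n(-1)^{k+1}a_kb_k\,\phi_k,\qquad \phi_k=\lambda\rho_k+\lambda^{-1}\rho_k^{-1}.
\]

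Since $\{a_kb_k\}$ is non-increasing, the Lemma applies as soon as $\{\phi_k\}$ is non-decreasing and then bounds the last sum by $(\max_k\phi_k)S_{ab}$. The function $\phi(\rho)=\lambda\rho+\lambda^{-1}\rho^{-1}$ increases exactly for $\rho\ge\lambda^{-1}$, so I would take $\lambda=1/m$; then $\phi$ is increasing on $[m,M]$, hence $\phi_k=\phi(\rho_k)$ is non-decreasing together with $\rho_k$, and $\max_k\phi_k=\phi(M)=M/m+m/M$. The Lemma thus gives $\sqrt{S_aS_b}\le\tfrac12(M/m+m/M)S_{ab}$. Finally, in the non-decreasing case $M=a_n/b_n$ and $m=a_1/b_1$, so $M/m=(a_n/a_1)(b_1/b_n)\le B/b$ because $a_n\le a_1$ and $b_1/b_n\le B/b$; as $t\mapsto t+t^{-1}$ increases on $[1,\infty)$ and $M/m\ge1$, we obtain $\tfrac12(M/m+m/M)\le\tfrac12(B/b+b/B)\le c_{\mathbf a,\mathbf b}$, and squaring yields the right-hand side of (\ref{CBS2}).

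The hard part is precisely what the parameter $\lambda$ is meant to circumvent. The classical Cassels/P\'olya--Szeg\H{o} proof would start from the termwise inequality $(a_k-mb_k)(Mb_k-a_k)\ge0$, that is $a_k^2+mMb_k^2\le(m+M)a_kb_k$, and sum it; under alternating signs this is useless, since $(a_k-mb_k)(Mb_k-a_k)=b_k^2(\rho_k-m)(M-\rho_k)$ is not monotone in $k$, so its alternating sum can be negative. Choosing $\lambda=1/m$ to make $\phi_k$ monotone is what restores the applicability of the Lemma and, at the same time, produces the sharp spread factor $\tfrac12(M/m+m/M)$; the one point I would verify with care is that this choice is optimal among all admissible $\lambda\ge1/m$, i.e. that $\lambda M+\lambda^{-1}M^{-1}$ is minimized there.
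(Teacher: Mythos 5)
Your proof is correct, and its engine is the same as the paper's: Young's inequality \eqref{Young} with $p=q=2$ converts $\sqrt{S_aS_b}$ into one half of an alternating sum of $a_kb_k$ times a coefficient depending on the ratio $a_k/b_k$, the coefficient sequence is arranged to be non-decreasing while $\{a_kb_k\}$ is non-increasing, and the Lemma plus an endpoint evaluation finishes. Where you genuinely differ is the device that forces the coefficients to be monotone. The paper rescales to $\mathbf{a}'=\mathbf{a}/A$, $\mathbf{b}'=\mathbf{b}/B$, observes that $c_k=a_k'/b_k'$ satisfies $c_1=1$ (this tacitly takes $A=a_1$ and $B=b_1$, a harmless but unstated reduction, legitimate because $x+1/x$ increases on $[1,\infty)$ so the constant only grows when the bounds are loosened), and then exploits convexity of $f(x)=x+1/x$ with minimum at $1$: a monotone sequence starting at the minimizer gives $\{f(c_k)\}$ non-decreasing in \emph{both} monotonicity cases, so no case split is needed. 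You instead keep a free parameter $\lambda$, reduce to the non-decreasing case by the $\mathbf{a}\leftrightarrow\mathbf{b}$ symmetry, and choose $\lambda=1/m$; this buys you the sharper intermediate Cassels-type bound $\tfrac12\left(M/m+m/M\right)$ in terms of the true spread of $\{a_k/b_k\}$, which you then correctly relax to $c_{\mathbf{a},\mathbf{b}}$, and it makes explicit the normalization the paper leaves implicit. Your closing worry about optimality of $\lambda=1/m$ is immaterial to the stated result — any admissible $\lambda$ yields a valid upper bound — and in any case it is settled in one line: $\lambda\mapsto\lambda M+(\lambda M)^{-1}$ increases once $\lambda M\ge 1$, and $\lambda\ge 1/m$ forces $\lambda M\ge M/m\ge 1$. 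Your lower-bound example (constant $\mathbf{a}$, even $n$) is the same degeneration the paper imports from Theorem~\ref{Holder1} (pairwise-equal $\mathbf{a}$), merely specialized; note that you rightly verified the extra hypothesis that $\{a_k/b_k\}$ is monotone, a check the Theorem~\ref{Holder1} example is not subject to.
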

\begin{proof} The left hand side inequality follows by the same method as in the proof of Theorem~\ref{Holder1}. Note that one can also find related examples on non-existence of a~direct analogue of Cauchy's inequality for sums with alternating signs in~\cite{Biernacki}.

Now we prove the right hand side of (\ref{CBS2}).  Since (\ref{CBS2}) is linear homogeneous, we may consider the sequences $\mathbf{a'}=\mathbf{a}/A$ and $\mathbf{b'}=\mathbf{b}/B$ such that
$$
0<a/A\le a_k'\le 1, \qquad 0<b/B\le b_k'\le 1,\qquad k=1,\ldots,n,
$$
instead of $\mathbf{a}$ and $\mathbf{b}$. Let $N_{\text{C}}$ denote the numerator of the fraction in (\ref{CBS2}) for $\mathbf{a'}$ and~$\mathbf{b'}$. Applying (\ref{Young}) with $p=q=2$ yields
\begin{equation*}
\begin{split}
N_{\text{C}}&\le \frac{1}{4}\left(\sum_{k=1}^n(-1)^{k+1}({a'_k}^2+{b'_k}^2)\right)^2\\
            &= \frac{1}{4}\left(\sum_{k=1}^n(-1)^{k+1}\left(\frac{{a'_k}}{b'_k}+\frac{b'_k}{{a'_k}}\right){a'_k} b'_k\right)^2.
\end{split}
\end{equation*}
In the last expression, the sequence $\{c_k+1/c_k\}$, where $c_k=a'_k/b'_k$, is non-decreasing. Indeed, $\{a_k/b_k\}$ is monotone (non-increasing or non-decreasing), consequently, $\{c_k\}$ is also monotone and  moreover $c_1=1$. Since $f(x)=x+1/x$ is convex for $x\in(0;\infty)$ and has a minimum at $x=1$, the sequence $\{f(c_k)\}=\{c_k+1/c_k\}$ is non-decreasing. From this by Lemma, we thus obtain
\begin{equation*}
\begin{split}
N_{\text{C}}\le \frac{1}{4}\left(\max\left\{f(c_k)\right\}\right)^2
\left(\sum_{k=1}^n(-1)^{k+1}a'_k b'_k\right)^2.
\end{split}
\end{equation*}
By convexity, $\max_{x\in[x_1,x_2]} f(x)=\max\{f(x_1),f(x_2)\}$ for each segment $x\in[x_1,x_2]$, $0<x_1\le x_2 <\infty$. Hence,
on account of the properties of $\mathbf{a}'$ and $\mathbf{b}'$, we have $a/A\le c_k \le B/b$ and consequently
$$
\max\left\{f(c_k)\right\} = \max\left\{a/A+A/a; B/b+b/B\right\}.
$$
This implies the right hand side of (\ref{CBS2}). It is easily seen that equality holds if both sequences are constant. The fact that $c_{\mathbf{a},\mathbf{b}}\ge 1$ is obvious.
\end{proof}

The inequality (\ref{CBS2}) is an analogue of the following inequality for non-negative terms, where the left hand side is just Cauchy's inequality and the right hand side is a particular case of a general result due to Y.~D.~Zhuang.
\begin{thL}[Cauchy (1821); Zhuang (1991) \cite{Zhuang}]
Let  $\mathbf{a}$ and $\mathbf{b}$ be positive and such that
$0<a\le a_k\le A <\infty$ and $0<b\le b_k\le B <\infty$ for $k=1,\ldots,n$, then
$$
1\le \frac{\sum_{k=1}^n a_k^2 \sum_{k=1}^n b_k^2}{\left(\sum_{k=1}^n a_k b_k\right)^2} \le \varsigma_{\mathbf{a},\mathbf{b}}^2 ,
$$
where $\varsigma_{\mathbf{a},\mathbf{b}}=\frac{1}{2}\max\left(A/b+b/A;a/B+B/a\right)$ and $\varsigma_{\mathbf{a},\mathbf{b}}\in[1;\infty)$.
\label{tE}
\end{thL}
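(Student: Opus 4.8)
The plan is to treat the two bounds separately. The left-hand inequality is nothing but the classical Cauchy--Schwarz inequality applied to $\mathbf{a}$ and $\mathbf{b}$: it gives $\left(\sum_{k=1}^n a_kb_k\right)^2\le \sum_{k=1}^n a_k^2\sum_{k=1}^n b_k^2$ and hence the lower bound $1$ at once. The real content is the reverse (P\'olya--Szeg\H{o} type) estimate on the right, and here I would simply mirror the method used in the proof of Proposition~\ref{Cauchy1}; the only simplification is that the sums now carry no alternating signs, so the role played there by the Lemma is taken over by the trivial bound $\sum_k c_k d_k\le (\max_k c_k)\sum_k d_k$, valid whenever $d_k\ge0$.

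For the upper bound, write $X=\sum_{k=1}^n a_k^2$ and $Y=\sum_{k=1}^n b_k^2$ and first apply Young's inequality (\ref{Young}) with $p=q=2$ to $\sqrt{X}$ and $\sqrt{Y}$, obtaining
$$
XY=\left(\sqrt{X}\,\sqrt{Y}\right)^2\le \frac14\left(X+Y\right)^2=\frac14\left(\sum_{k=1}^n (a_k^2+b_k^2)\right)^2.
$$
Next I would use the elementary identity $a_k^2+b_k^2=\left(\dfrac{a_k}{b_k}+\dfrac{b_k}{a_k}\right)a_kb_k$ to recast the inner sum and then bound its coefficient termwise:
$$
\sum_{k=1}^n (a_k^2+b_k^2)=\sum_{k=1}^n\left(\frac{a_k}{b_k}+\frac{b_k}{a_k}\right)a_kb_k\le \max_{k}\left\{\frac{a_k}{b_k}+\frac{b_k}{a_k}\right\}\sum_{k=1}^n a_kb_k,
$$
which is legitimate precisely because each $a_kb_k\ge0$.

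It then remains to evaluate the maximum. Setting $s_k=a_k/b_k$, the hypotheses force $s_k\in[a/B,A/b]$, and since the function $g(s)=s+1/s$ is convex on $(0;\infty)$ its maximum over this segment is attained at an endpoint, so that $\max_k\{s_k+1/s_k\}=\max\{a/B+B/a;\,A/b+b/A\}=2\varsigma_{\mathbf{a},\mathbf{b}}$. Combining the three displays yields $XY\le \varsigma_{\mathbf{a},\mathbf{b}}^2\left(\sum_{k=1}^n a_kb_k\right)^2$, which is exactly the asserted upper bound, while $\varsigma_{\mathbf{a},\mathbf{b}}\ge1$ follows immediately from $g(s)\ge2$. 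The only place demanding care --- the main obstacle --- is this last convexity step: one must verify that the relevant range of the ratio $s_k$ is exactly $[a/B,A/b]$, so that $A$ pairs with $b$ and $a$ pairs with $B$ in accordance with the definition of $\varsigma_{\mathbf{a},\mathbf{b}}$ rather than some other combination of the four bounds, and that these endpoints are admissible values of $s_k$. Everything else is a routine chain of elementary inequalities.
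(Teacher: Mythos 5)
Your proof is correct. One contextual point first: the paper gives no proof of this statement at all --- Theorem~\ref{tE} is quoted as a known result (the lower bound is Cauchy's inequality, the upper bound a special case of Zhuang's theorem, with a citation to the literature) --- so the only in-paper comparison available is with the proof of Proposition~\ref{Cauchy1}, which you explicitly set out to mirror, and your adaptation is sound. It is in fact slightly cleaner than a literal transcription would be: the paper's proof of Proposition~\ref{Cauchy1} first normalizes to $\mathbf{a}'=\mathbf{a}/A$, $\mathbf{b}'=\mathbf{b}/B$, which places the ratio $c_k=a_k'/b_k'$ in $[a/A;\,B/b]$ and produces the differently paired constant $c_{\mathbf{a},\mathbf{b}}=\frac{1}{2}\max\{A/a+a/A;\,B/b+b/B\}$, whereas by skipping the normalization you get $s_k=a_k/b_k\in[a/B;\,A/b]$ directly and recover exactly the pairing in $\varsigma_{\mathbf{a},\mathbf{b}}=\frac{1}{2}\max\{A/b+b/A;\,a/B+B/a\}$ --- precisely the pairing issue you flagged as the main obstacle, and you resolve it correctly. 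You are also right that the Lemma, and with it the extra monotonicity hypothesis on $\{a_k/b_k\}$ that Proposition~\ref{Cauchy1} requires, becomes superfluous once the alternating signs are gone, since $\sum_k c_k d_k\le\left(\max_k c_k\right)\sum_k d_k$ holds trivially for $d_k\ge0$. One cosmetic remark: $\max_k\{s_k+1/s_k\}$ need not \emph{equal} $\max\{a/B+B/a;\,A/b+b/A\}$, since the values $s_k$ need not attain the endpoints of the admissible interval; it is merely bounded above by that quantity --- which is all your chain of estimates actually uses, so the argument stands.
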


\begin{remark} Assume that $\mathbf{a}\equiv 1$,  $\mathbf{b}$ is such that $0<b\le b_k\le B <\infty$ for all $k$, and $n$ is odd in Theorem~\ref{Holder1}. Then it is clear that under these assumptions Theorems~\ref{tA} and~\ref{Holder1} can be summarized as follows
$$
1 \le \frac{\left(\sum_{k=1}^{2n+1}(-1)^{k+1}b_k^p\right)^{1/p}}{\sum_{k=1}^{2n+1} (-1)^{k+1}b_k}
\le \left(b^{-1}+B^{p-1}\right), \quad p\ge 1.
$$
Let us write this inequality in another form. Suppose $b_k=x_k^r$, $p=R/r$, where $R$ and $r$ are positive integer numbers and $R\ge r$. Raising all expressions to the power $1/r$ and taking into account that $x^{-r}+X^{R-r}\le x^{-r}(1+X^R)$ implies
\begin{equation}
\label{geom}
1 \le \frac{\left(\sum_{k=1}^{2n+1}(-1)^{k+1}x_k^R\right)^{1/R}}{\left(\sum_{k=1}^{2n+1}(-1)^{k+1}x_k^r\right)^{1/r}}\le \frac{1}{x}\left(1+X^{R}\right)^{1/r}, \quad r,R\in \mathbb{N}, \quad r\le R,
\end{equation}
where $0<x\le x_k\le X <\infty$ for each $k=1,\ldots,{2n+1}$. Note that the right hand side of (\ref{geom}) cannot be appreciably improved from the point of view of Remark~1.

Following \cite{Weinberger}, we explain a geometrical meaning of (\ref{geom}), which has some applications in theory of symmetrization.
Let $x_k$ be the radii of concentric spheres in a space of dimension $R$. Then the value in the numerator is the radius of a single sphere having the total volume contained between the spheres of radius $x_1$ and $x_2$, $x_3$ and $x_4$, etc., and the value in the denominator is the equivalent radius in the same sense in a space of dimension $r$. Hence (\ref{geom}) states that the fraction of radii of these spheres cannot be small since it is bounded by a constant and can be as large as it is allowed by boundaries for the radii and dimensions $r$ and $R$.
\end{remark}

\section{Minkowski type inequalities}

In this section, we obtain sharp Minkowski type inequalities with alternating signs and sharp reverse Minkowski's inequality for non-negative terms.
\begin{theorem}
Let $\mathbf{a}$ and $\mathbf{b}$ be non-negative non-increasing, then
\begin{equation}
\label{Minn}
0\le \frac{\left(\sum_{k=1}^n (-1)^{k+1}a_k^p\right)^{1/p}+\left(\sum_{k=1}^n (-1)^{k+1} b_k^p\right)^{1/p}}{\left(\sum_{k=1}^n (-1)^{k+1}(a_k +b_k)^p\right)^{1/p}} \le 2^{1-1/p}, \quad p\ge 1.
\end{equation}
The constant $2^{1-1/p}$ is best possible. The left hand side of $(\ref{Minn})$ should be read as for all $n\ge 2$ there exists no constant depending on $p$ only and bounding the fraction in~$(\ref{Minn})$ from below.
The fraction becomes reciprocal if $0<p<1$.
\label{Mink0}
\end{theorem}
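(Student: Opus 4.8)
Write $F_{\mathrm M}$ for the fraction in (\ref{Minn}) and abbreviate $S_{\mathbf a}=\sum_{k=1}^n(-1)^{k+1}a_k^p$, $S_{\mathbf b}=\sum_{k=1}^n(-1)^{k+1}b_k^p$, $S_{\mathbf a+\mathbf b}=\sum_{k=1}^n(-1)^{k+1}(a_k+b_k)^p$, all non-negative by (\ref{+-}) and the monotonicity of $\mathbf a,\mathbf b$. The plan for the upper bound is to factor it through two elementary steps. First, applying (\ref{Jensen}) with $\alpha=S_{\mathbf a}^{1/p}$, $\beta=S_{\mathbf b}^{1/p}$ and taking $p$-th roots gives $S_{\mathbf a}^{1/p}+S_{\mathbf b}^{1/p}\le 2^{1-1/p}(S_{\mathbf a}+S_{\mathbf b})^{1/p}$, which supplies exactly the constant $2^{1-1/p}$. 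Second, I would prove $S_{\mathbf a}+S_{\mathbf b}\le S_{\mathbf a+\mathbf b}$, equivalently $C:=\sum_{k=1}^n(-1)^{k+1}c_k\ge0$ with $c_k=(a_k+b_k)^p-a_k^p-b_k^p$. The point is that $g(x,y)=(x+y)^p-x^p-y^p$ has partials $p[(x+y)^{p-1}-x^{p-1}]\ge0$ for $p\ge1$, hence is non-decreasing in each variable; writing $C$ via (\ref{+-}) as $\sum_j\bigl(g(a_{2j-1},b_{2j-1})-g(a_{2j},b_{2j})\bigr)$ and using that $\mathbf a,\mathbf b$ are non-increasing makes every summand non-negative. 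This is precisely where the monotonicity hypothesis enters, and chaining the two steps yields $F_{\mathrm M}\le 2^{1-1/p}$.

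For the left-hand side I would, for each $n\ge2$, drive $F_{\mathrm M}$ below any prescribed positive level. Fix a non-increasing $\mathbf b$ with $S_{\mathbf b}>0$ and choose $\mathbf a$ with large entries but $S_{\mathbf a}=0$: for even $n$ take $\mathbf a\equiv M$, for odd $n$ take $\mathbf a=(M,\dots,M,0)$. Then the numerator equals the fixed number $S_{\mathbf b}^{1/p}$, while the left inequality in (\ref{ineq+-}) bounds $S_{\mathbf a+\mathbf b}$ below by a positive multiple of $M^{p-1}$, which tends to $\infty$ as $M\to\infty$ for $p>1$; hence $F_{\mathrm M}\to0$ and no constant depending on $p$ alone bounds it from below. (At $p=1$ the fraction is identically $1$, matching $2^{1-1/p}=1$, so this degenerate case is excluded from the lower-bound reading.)

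To show $2^{1-1/p}$ is best possible I would build a limiting family making both steps above asymptotically tight. The first step is tight when $S_{\mathbf a}=S_{\mathbf b}$, and the second when the gap $C$ is negligible against $S_{\mathbf a}+S_{\mathbf b}$. I expect this to be the main obstacle: $C$ vanishes exactly only when $\mathbf a$ and $\mathbf b$ have disjoint supports, which monotonicity forbids, so one must separate scales rather than supports and tune the rates carefully. Concretely, for $n=4$ I would take $\mathbf a=(a_1,a_2,\delta,\delta)$ with $a_1^p-a_2^p=1$ and $a_2\to\infty$ (so $a_1-a_2\to0$ by (\ref{ineq+-})) and $\delta\to0$, together with a fixed $\mathbf b=(b_3,b_3,b_3,b_4)$, $b_3^p-b_4^p=1$. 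Then $S_{\mathbf a}=S_{\mathbf b}=1$, while (\ref{ineq+-}) yields $c_1-c_2\le (a_1+b_3)^{p-1}/a_2^{p-1}-1\to0$ and $c_3-c_4\to0$ as $\delta\to0$; hence $C\to0$ and $F_{\mathrm M}^{\,p}=2^p/(2+C)\to2^{p-1}$.

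Finally, for $0<p<1$ the whole scheme runs with every inequality reversed: (\ref{JensenRev}) gives $S_{\mathbf a}^{1/p}+S_{\mathbf b}^{1/p}\ge2^{1-1/p}(S_{\mathbf a}+S_{\mathbf b})^{1/p}$ since $t\mapsto t^{1/p}$ is now convex, and $g$ becomes non-increasing in each variable, so $S_{\mathbf a+\mathbf b}\le S_{\mathbf a}+S_{\mathbf b}$; together these force $F_{\mathrm M}\ge2^{1-1/p}$ with no finite upper bound, i.e. the reciprocal fraction $1/F_{\mathrm M}$ obeys the same two-sided estimate $0\le 1/F_{\mathrm M}\le 2^{1/p-1}$ with best constant and no positive lower bound.
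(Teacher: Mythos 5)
Your proof is correct, and its core --- the upper bound --- is exactly the paper's argument: Jensen's inequality (\ref{Jensen}) applied to $S_{\mathbf a}^{1/p}$ and $S_{\mathbf b}^{1/p}$, followed by the key step $S_{\mathbf a}+S_{\mathbf b}\le S_{\mathbf a+\mathbf b}$ deduced from the coordinatewise monotonicity of $(x+y)^p-x^p-y^p$; the paper compares consecutive terms $f(a_k,b_k)\ge f(a_{k+1},b_{k+1})$ while you pair terms via (\ref{+-}), an immaterial difference, and your treatment of $0<p<1$ by reversing both steps likewise matches the paper (and Remark~\ref{rem}). Where you genuinely diverge is in the two extremal families. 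For the left-hand side the paper shrinks the data ($\mathbf a=\{1,1,0,\ldots\}$, $\mathbf b=\{b,0,\ldots\}$, $b\to0$, giving $F_{\mathrm M}\le b^{1-1/p}$), whereas you blow $\mathbf a$ up ($\mathbf a\equiv M$ or $(M,\ldots,M,0)$ with $S_{\mathbf a}=0$, $M\to\infty$); both work, but your odd-$n$ case needs one small repair: $S_{\mathbf b}>0$ alone does not guarantee $\sum_{k=1}^{n-1}(-1)^{k+1}b_k>0$ (take $\mathbf b$ constant, in which case your $F_{\mathrm M}$ equals $1$ identically), so you must choose $\mathbf b$ with, say, $b_1>b_2$ --- harmless, since the choice of $\mathbf b$ is yours. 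For sharpness, the paper's family is simpler and lives already at $n\ge3$: $\mathbf a=\{1,1,1,0,\ldots\}$, $\mathbf b=\{b,\sqrt[p]{b^p-1},0,\ldots\}$ with $b\to\infty$, a one-parameter realization of the same ``separate the scales'' idea as your two-parameter $n=4$ construction (first $a_2\to\infty$, then $\delta\to0$); your limit computation via (\ref{ineq+-}) is nevertheless correct, and your version has the minor expository virtue of isolating the two tightness conditions, $S_{\mathbf a}=S_{\mathbf b}$ and $C\to0$, explicitly.
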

\begin{proof}
Throughout the proof,  $F_{\text{M}}$ denotes the fraction in (\ref{Minn}).
We begin by proving the left hand side of (\ref{Minn}), i.e. by proving of non-existence of positive constant depending on $p$ only, which bounds $F_{\text{M}}$ from below. To prove this, it is sufficient to make the following observation. For each $p>1$ there exists a sequence such that $F_{\text{M}}$  tends to zero. Indeed, suppose that $n\ge 2$, $\mathbf{a}=\{1,1,0,\ldots,0,\ldots\}$ and $\mathbf{b}=\{b,0,\ldots,0,\ldots\}$ with some $b>0$. From the left hand side of (\ref{ineq+-}), we deduce that
$$
F_{\text{M}}=\frac{b}{\left((1+b)^p-1\right)^{1/p}}\le\frac{b}{(pb)^{1/p}}<b^{1-\frac{1}{p}}.
$$
In this way $F_{\text{M}}\to 0$ as $b\to 0$ since $1-1/p>0$ for all $p>1$.

Let us prove the right hand side of (\ref{Minn}). From (\ref{Jensen}) we have
$$
\left(\left(\sum_{k=1}^n (-1)^{k+1}a_k^p\right)^{1/p}+\left(\sum_{k=1}^n (-1)^{k+1} b_k^p\right)^{1/p}\right)^p
\le 2^{p-1}\left(\sum_{k=1}^n (-1)^{k+1}(a_k^p+b_k^p)\right).
$$
Now it is enough to show that
\begin{equation}
\label{ineq22}
\sum_{k=1}^n (-1)^{k+1}(a_k^p+b_k^p)\le \sum_{k=1}^n (-1)^{k+1}(a_k +b_k)^p, \qquad p\ge 1,
\end{equation}
and extract the $p$\;th root. The inequality~(\ref{ineq22}) is equivalent to
$$
\sum_{k=1}^n (-1)^{k+1}\left((a_k +b_k)^p-(a_k^p+b_k^p)\right)\ge 0,
$$
which holds since $(a_k +b_k)^p-(a_k^p+b_k^p)\ge (a_{k+1} +b_{k+1})^p-(a_{k+1}^p+b_{k+1}^p)$
for each $k=1,\ldots,n-1$. The latter follows from the implication
 \begin{equation}
\label{ineq33}
f(a_k,y)\ge f(a_{k+1},y),\; f(x,b_{k})\ge f(x,b_{k+1}) \; \Rightarrow\; f(a_k,b_k)\ge f(a_{k+1},b_{k+1}),
\end{equation}
which is true since the function $f(x,y)=(x+y)^p-(x^p+y^p)$ with $p\ge 1$ is non-decreasing for $x\ge0$ and $y\ge 0$ as a function of $x$ or of $y$ with a fixed $y$ or $x$ respectively, since $f'_x\ge0$ and $f'_y\ge 0$. This completes the proof of (\ref{ineq22}).

The accuracy of the constant $2^{1-1/p}$ is verified by the following example. Let $\mathbf{a}=\{1,1,1,0,\ldots,0,\ldots\}$, $\mathbf{b}=\{b,\sqrt[p]{b^p-1},0,\ldots,0,\ldots\}$, $b>1$, and $n\ge 3$. Then
$$
F_{\text{M}}=\frac{2}{\left((1 +b)^p-(1+\sqrt[p]{b^p-1})^p+1\right)^{1/p}}=2^{1-1/p}-\varepsilon_b,
$$
where $\varepsilon_b$ is positive and $\lim_{b\to \infty}\varepsilon_b=0$ since $(1 +b)^p-(1+\sqrt[p]{b^p-1})^p\to 1$ from above as $b\to \infty$.

For $0<p<1$ the fraction $F_{\text{M}}$ should be replaced by $1/F_{\text{M}}$. Indeed, we then use (\ref{JensenRev}) instead of (\ref{Jensen}) and reversed version of  (\ref{ineq22}) since $f(x,y)=(x^p+y^p)-(x+y)^p$, $0<p<1$, is non-decreasing for $x\ge0$ and $y\ge 0$ as a function of $x$ or of $y$, hence the implication (\ref{ineq33}) is still valid for this function. The same examples are used to prove sharpness.
\end{proof}

\begin{remark}
 We leave it to the reader to verify that
(\ref{Minn}) has a weighed version if  put $a_k=w_k^{1/p} \alpha_k$ and $b_k=w_k^{1/p} \beta_k$, where
 sequences $\{\alpha_k\}$, $\{\beta_k\}$ and weights $\{w_k\}$ are non-increasing and moreover $w_1\le 1$.
\end{remark}
\begin{remark}
\label{rem}
By Theorem~\ref{Mink0}, for $0<p<1$ we have
$$
\left(\sum_{k=1}^n (-1)^{k+1}(a_k +b_k)^p\right)^{\frac{1}{p}}\le 2^{\frac{1}{p}-1}\left(\left(\sum_{k=1}^n (-1)^{k+1}a_k^p\right)^{\frac{1}{p}}+\left(\sum_{k=1}^n (-1)^{k+1} b_k^p\right)^{\frac{1}{p}}\right),
$$
where the constant is sharp. Because of this inequality, it is reasonable to suppose that a real non-negative functional
$$
\|\mathbf{x}\|=\left(\sum_{k=1}^n (-1)^{k+1}|x_k|^p\right)^{1/p}, \qquad 0<p<1,
$$
is \textit{a quasi-norm} on an appropriate (vector) space $E$ of decreasing non-negative sequences $\mathbf{x}=\{x_k\}_{k=1}^n$ (including $\mathbf{x}=0$) since it satisfies the axioms:
\begin{enumerate}
  \item $\|\mathbf{x}\|=0$ if and only if $\mathbf{x}=0$;
  \item $\|\alpha \mathbf{x}\|=|\alpha|\|\mathbf{x}\|$ for all $\alpha\in \mathbb{R}$ and $\mathbf{x}\in E$;
  \item there exist $K>0$ 
   such that $\|\mathbf{x}+\mathbf{y}\|\le K\left(\|\mathbf{x}\|+\|\mathbf{y}\|\right)$ for all $\mathbf{x},\mathbf{y}\in E$ (in~our case $K=2^{1/p-1}\ge 1$).
\end{enumerate}
However, the space $E$ equipped with regular operations of vector addition and scalar multiplication
is not a vector space (the axiom on existence of an additive inverse element is not satisfied) but just a cone for which quasi-norms are usually defined in a different way and with $K=1$ (see, e.g.,
\cite{Valero}).
\end{remark}

\medskip

Using the same technics as in the proof of Theorem~\ref{Mink0}, we can easily obtain reverse Minkowski's inequality with a sharp constant being independent of the sequences.
\begin{propos} Let $\mathbf{a}$ and $\mathbf{b}$ be non-negative and belong to $l_p$, $p\ge 1$, then
\begin{equation}
\label{myMin}
1\le \frac{\left(\sum_{k=1}^\infty a_k^p\right)^{1/p}+\left(\sum_{k=1}^\infty b_k^p\right)^{1/p}}{\left(\sum_{k=1}^\infty (a_k +b_k)^p\right)^{1/p}}\le
2^{1-1/p}, \qquad p\ge 1.
\end{equation}
Both constants are best possible.
\label{myMinkProp}
\end{propos}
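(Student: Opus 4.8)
The plan is to follow exactly the scheme of the proof of Theorem~\ref{Mink0}, replacing the alternating-sign inequality~(\ref{ineq22}) by its much simpler non-negative counterpart~(\ref{ineqS}). Let $F$ denote the fraction in~(\ref{myMin}); since $\mathbf{a},\mathbf{b}\in l_p$ and hence $\mathbf{a}+\mathbf{b}\in l_p$, all three series converge and $F$ is well defined. I would dispose of the left inequality first: the bound $F\ge 1$ is nothing but the classical Minkowski inequality
$$
\left(\sum_{k=1}^\infty (a_k+b_k)^p\right)^{1/p}\le \left(\sum_{k=1}^\infty a_k^p\right)^{1/p}+\left(\sum_{k=1}^\infty b_k^p\right)^{1/p},
$$
so no separate argument is needed.

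For the right inequality I would apply~(\ref{Jensen}) with $\alpha=\left(\sum_k a_k^p\right)^{1/p}$ and $\beta=\left(\sum_k b_k^p\right)^{1/p}$, obtaining
$$
\left(\left(\sum_{k=1}^\infty a_k^p\right)^{1/p}+\left(\sum_{k=1}^\infty b_k^p\right)^{1/p}\right)^p\le 2^{p-1}\sum_{k=1}^\infty (a_k^p+b_k^p),
$$
and then invoke~(\ref{ineqS}) termwise, $a_k^p+b_k^p\le (a_k+b_k)^p$, summing over $k$ to bound the right-hand side by $2^{p-1}\sum_{k=1}^\infty (a_k+b_k)^p$. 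Taking the $p$\,th root then gives $F\le 2^{1-1/p}$. The point worth emphasising is that, in contrast to Theorem~\ref{Holder1}, no monotonicity or boundedness hypotheses enter: since~(\ref{ineqS}) holds unconditionally for each $k$, there is no need for the implication~(\ref{ineq33}) or for Lemma, and this is precisely why the constant is universal.

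It remains to verify sharpness, for which I would exhibit sequences attaining each bound \emph{exactly} rather than in a limit. For the lower bound, take $\mathbf{a}=\mathbf{b}=\{1,0,0,\ldots\}$ (or any two non-negatively proportional sequences): the numerator equals $2$, while $\sum_k(a_k+b_k)^p=2^p$ gives denominator $2$, so $F=1$. For the upper bound, take sequences with disjoint supports, say $\mathbf{a}=\{1,0,0,\ldots\}$ and $\mathbf{b}=\{0,1,0,\ldots\}$: both norms equal $1$, the numerator equals $2$, and $\sum_k(a_k+b_k)^p=1+1=2$ gives denominator $2^{1/p}$, whence $F=2^{1-1/p}$. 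Thus both $1$ and $2^{1-1/p}$ are best possible.

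I do not expect a genuine obstacle here: the whole proof is an assembly of the elementary inequalities~(\ref{Jensen}) and~(\ref{ineqS}), exactly as the remark preceding the statement anticipates. The only points requiring a word of care are the convergence of the series, guaranteed by the $l_p$ assumption, and checking that the two sharpness examples achieve equality on the nose (so that one may assert the constants are attained, not merely approached).
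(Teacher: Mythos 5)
Your proof of the two-sided inequality coincides with the paper's: the lower bound is classical Minkowski, and the upper bound is (\ref{Jensen}) applied to the two $l_p$-norms followed by the termwise inequality (\ref{ineqS}) and extraction of the $p$th root. Where you genuinely depart is in the sharpness verification. The paper only shows the upper constant is \emph{approached}: it takes $\mathbf{a}=\{1,\ldots,1,0,\ldots\}$ (first $n$ entries equal to $1$) and $\mathbf{b}=\{n^{1/p},0,\ldots\}$ and computes the ratio to be $2^{1-1/p}-\varepsilon_n$ with $\varepsilon_n\to 0$. You instead exhibit exact attainment: for disjointly supported unit vectors equality holds both in (\ref{Jensen}) (the two norms are equal) and in (\ref{ineqS}) termwise (since $a_kb_k=0$ for every $k$), so the fraction equals $2^{1-1/p}$ on the nose; likewise $\mathbf{a}=\mathbf{b}$ attains the lower bound $1$. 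Your examples are simpler and prove the slightly stronger fact that both constants are attained, not merely best possible. One thing the paper's asymptotic example buys that yours does not: its sequences are non-increasing, so it shows sharpness already within the class of monotone sequences relevant to Theorem~\ref{Mink0}, whereas your $\mathbf{b}=\{0,1,0,\ldots\}$ is not non-increasing --- indeed, for $p>1$ exact attainment is impossible for two nonzero non-increasing sequences, since then $a_1b_1>0$ forces strict inequality in (\ref{ineqS}) at $k=1$. Since the Proposition assumes only non-negativity, your argument is complete as stated.
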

\begin{proof}
The left hand side of (\ref{myMin}) is just Minkowski's inequality. Now we prove the right hand side. From (\ref{Jensen}) it follows that
$$
\left(\left(\sum_{k=1}^\infty a_k^p\right)^{1/p}+\left(\sum_{k=1}^\infty b_k^p\right)^{1/p}\right)^p\le 2^{p-1}\left(\sum_{k=1}^\infty (a_k^p+b_k^p)\right).
$$
Applying the inequality (\ref{ineqS}) and extracting the $p$\,th root of both sides give (\ref{myMin}).

The constant $2^{1-1/p}$ cannot be replaced by smaller one. Indeed, if $n$ is positive integer, then for
$\mathbf{a}=\{1,\ldots,1,0,0,\ldots\}$ (first $n$ elements are units) and $\mathbf{b}=\{n^{1/p},0,0,\ldots\}$ we obtain after some simplifications
$$
\frac{\left(\sum_{k=1}^n a_k^p\right)^{1/p}+\left(\sum_{k=1}^n b_k^p\right)^{1/p}}{\left(\sum_{k=1}^n (a_k +b_k)^p\right)^{1/p}}=
2\left(1-\frac{1}{n}+\left(1+\frac{1}{n^{1/p}}\right)^p\right)^{-1/p}=2^{1-1/p}-\varepsilon_n,
$$
where positive $\varepsilon_n\to 0$ as $n\to \infty$.
\end{proof}
\begin{remark} Analogously we can obtain integral version of $(\ref{myMin})$:
\begin{equation}
\label{Mink_int}
1\le \frac{\|f\|_p+\|g\|_p}{\|f+g\|_p}\le 2^{1-1/p}, \qquad p\ge 1,
\end{equation}
where $f,g$ belong to usual $L_{p}$-space with the norm $\|f\|_p=\left(\int f^p\right)^{1/p}$, $p\ge 1$.
\end{remark}

To the best of our knowledge, the right hand side of (\ref{myMin}) (and (\ref{Mink_int})) is not contained in common books on inequalities.
However it can be also obtained from the following  result of theory of quasi-normed spaces:
\begin{equation}
\label{quasiMin}
1\le \frac{\|g+h\|_p}{\|g\|_p+\|h\|_p}\le 2^{1/p-1}, \qquad g,h\in L_p,\qquad 0<p<1,
\end{equation}
where $f,g$ belong to quasi-Banach space $L_p$ with the quasi-norm $\|f\|_p=\left(\int f^p\right)^{1/p}$, $0<p<1$ (see e.g. \cite[Appendix H]{Benyamini}, sf. Remark~\ref{rem}). Indeed, it is sufficient to apply quasi-linearization technics as in \cite[\S 22]{BB} to (\ref{quasiMin}).

\bigskip

Let us now compare the constant $2^{1-1/p}$ from (\ref{myMin}) with ones in \cite{Bougoffa} and \cite{Castillo}, where reverse integral Minkowski's inequalities for positive functions were obtained in terms of boundaries of their quotient. At first we formulate a result from \cite{Bougoffa}.

\begin{thL}[Bougoffa \cite{Bougoffa} (a discrete version)] Let $\mathbf{a}$ and $\mathbf{b}$ be positive and such that
${0< m\le a_k/b_k \le M<\infty}$, $k=1,\ldots,n$, then
$$
\left(\sum_{k=1}^n a_k^p\right)^{1/p}+\left(\sum_{k=1}^n b_k^p\right)^{1/p}\le
C_{m,M}\left(\sum_{k=1}^n (a_k +b_k)^p\right)^{1/p}, \qquad p\ge 1,
$$
where $C_{m,M}=1+\frac{1}{m+1}-\frac{1}{M+1}$ and $C_{m,M}\in[1;2)$.
\end{thL}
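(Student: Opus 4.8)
The plan is to reduce the statement to two pointwise estimates obtained directly from the hypothesis $0< m\le a_k/b_k \le M<\infty$. First I would recast the ratio bounds relative to the sum $a_k+b_k$. From $a_k\ge m\,b_k$ one gets $a_k+b_k\ge (m+1)b_k$, that is $b_k\le \frac{1}{m+1}(a_k+b_k)$; from $a_k\le M\,b_k$ one gets $a_k+b_k\ge \frac{M+1}{M}a_k$, that is $a_k\le \frac{M}{M+1}(a_k+b_k)$. Both hold termwise for every $k=1,\ldots,n$, and this asymmetric split — estimating $b_k$ through the lower ratio $m$ and $a_k$ through the upper ratio $M$ — is the only real idea in the argument.

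Next I would raise each pointwise inequality to the power $p\ge 1$, sum over $k$, and extract the $p$\,th root; since all terms are non-negative this is legitimate. This yields
$$
\left(\sum_{k=1}^n b_k^p\right)^{1/p}\le \frac{1}{m+1}\left(\sum_{k=1}^n (a_k+b_k)^p\right)^{1/p}, \qquad \left(\sum_{k=1}^n a_k^p\right)^{1/p}\le \frac{M}{M+1}\left(\sum_{k=1}^n (a_k+b_k)^p\right)^{1/p}.
$$
Adding the two inequalities and collecting the factor in front of $\left(\sum_{k=1}^n(a_k+b_k)^p\right)^{1/p}$ gives $\frac{M}{M+1}+\frac{1}{m+1}=1-\frac{1}{M+1}+\frac{1}{m+1}=C_{m,M}$, which is precisely the claimed inequality.

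It then remains to verify $C_{m,M}\in[1;2)$. Since $m\le M$ we have $\frac{1}{m+1}\ge \frac{1}{M+1}$, so $C_{m,M}\ge 1$, with equality exactly when $m=M$. For the upper bound, $\frac{1}{m+1}-\frac{1}{M+1}<\frac{1}{m+1}<1$ because $m>0$, whence $C_{m,M}<2$.

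I do not expect a genuine obstacle here: the entire proof is elementary once the ratio condition is rewritten as the two termwise bounds against $a_k+b_k$. The one point deserving attention is the asymmetry of the split, so that the two contributions combine into exactly $C_{m,M}$ rather than a coarser symmetric constant; getting the bookkeeping $\frac{M}{M+1}=1-\frac{1}{M+1}$ right is what produces the stated value.
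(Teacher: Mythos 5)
Your proof is correct. The paper states this theorem without proof, quoting it from Bougoffa's article only to compare its constant $C_{m,M}$ with the constant $2^{1-1/p}$ in (\ref{myMin}); your argument --- the termwise bounds $b_k\le\frac{1}{m+1}(a_k+b_k)$ and $a_k\le\frac{M}{M+1}(a_k+b_k)$, followed by monotonicity of the $\ell_p$ sums and the bookkeeping $\frac{M}{M+1}+\frac{1}{m+1}=C_{m,M}$ --- is exactly the standard derivation (essentially Bougoffa's own, transposed from the integral to the discrete setting), so nothing further is needed.
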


It is easily seen that the right hand side of (\ref{myMin}) is more precise for all $\mathbf{a}$ and $\mathbf{b}$ as above if
$$
p <\ln 2\;/(\ln 2- \ln C_{m,M}).
$$

 For example, let $m$ be close to zero and $M$ be large enough, then $C_{m,M}=2\;(1-\varepsilon_{m,M})$, where positive $\varepsilon_{m,M}$ is small. Hence, the constant in the right hand side of (\ref{myMin}) is stronger for all $p\in[1;p^*)$, where $p^*:=- \ln 2/\ln\left(1-\varepsilon_{m,M}\right)$ can be arbitrarily large for sufficiently small $\varepsilon_{m,M}$.

\begin{remark}
Finally, we would like to mention that it would be interesting to obtain inequalities with alternating sings for another monotone type sequences such as convex and general monotone (see e.g. \cite{Tikhonov}).
\end{remark}

\bigskip

\textbf{Acknowledgements.} This research was supported by MTM 2011-27637 grant.

\bigskip

\bigskip

\bigskip

\bigskip

\textbf{Petr Chunaev}

Centre de Recerca Matem\`{a}tica

Campus de Bellaterra, Edifici C

08193 Bellaterra (Barcelona), Spain

Tel.: +34-93 581 1081

Fax: +34-93 581 2202

chunayev@mail.ru


\begin{thebibliography}{99}

\bibitem{BB} E.~Beckenbach and R.~Bellman. \textit{Inequalities.}  Springer-Verlag, Berlin, 1961.

\bibitem{Bellman1953} R. Bellman. \textit{On an inequality due to Weinberger.} Amer. Math. Monthly. Vol.~60 (1953), p.~402.

\bibitem{Bellman1959} R. Bellman.  \textit{On inequalities with alternating signs.} Proc. Amer. Math. Soc.  Vol.~10 (1959), pp.~807–-809.

\bibitem{Benyamini} Y. Benyamini, J. Lindenstrauss. \textit{Geometric Nonlinear Functional Analysis. Vol. 1.} AMS Colloquium Publications, 48. American Mathematical Society, Providence, RI, 2000.

\bibitem{Biernacki} M. Biernacki. \textit{Sur des in\'egalit\'es remplies par des expressions dont les termes ont
   des signes altern\'es}. Ann. Univ. Mariae Curie-Sk\l odowska. Sect. A., Vol.~7 (1953; 1954), pp.~89--102.

\bibitem{Bjelica}  M. Bjelica.  \textit{Refinement and converse of Brunk-Olkin inequality}. J. Math. Anal. Appl. Vol.~227, no.~2 (1998), pp.~462--467.

\bibitem{Bougoffa} L. Bougoffa. \textit{On Minkowski and Hardy integral inequalities.} J. Inequal. Pure Appl. Math. Vol.~7 (2006), no.~2, Article 60, 3~pp.

\bibitem{Brunk} H. D. Brunk.  \textit{On an inequality for convex functions.} Proc. Amer. Math. Soc. Vol.~7 (1956), pp.~817–-824.

\bibitem{Castillo} R. E. Castillo,  E. Trousselot. \textit{Reverse generalized H\"older and Minkowski type inequalities and their applications}. Bol. Mat. (N.S.). Vol.~17 (2010), no.~2, pp.~137--142.


\bibitem{HLP} G. H. Hardy, J. E. Littlewood, G. P\'{o}lya. \textit{Inequalities.} Cambridge University Press, Cambridge, 1952.

\bibitem{Tikhonov} E.~Liflyand, S.~Tikhonov. \textit{A concept of general monotonicity and applications}.  {Math. Nachr.},
Vol. 284, No. 8-9 (2011), 1083--1098.

\bibitem{Pecaric3} B. Mond, J. E. Pe{\v{c}}ari{\'c}.  \textit{Szeg\H o and related inequalities for operator-convex functions.} Soochow J. Math. Vol.~22, no.~1 (1996), pp.~33--37.

\bibitem{Szego} G. Szeg\H{o}.  \textit{\"{U}ber eine Verallgemeinerung des Dirichletschen Integrals.} Math. Z. Vol.~52 (1950), pp.~676–-685.

\bibitem{Olkin} I. Olkin.  \textit{On inequalities of Szeg\H{o} and Bellman.} Proc. Natl. Acad. Sci. USA. Vol.~45 (1959), pp.~230–-231; addendum, p.~1553.

\bibitem{Pecaric2} J. E. Pe{\v{c}}ari{\'c}.  \textit{On an inequality of G. Szeg\H o.} J. Math. Anal. Appl. Vol.~158, no.~2 (1991), pp.~349--351.

\bibitem{Steffensen} J. F. Steffensen. \textit{Bounds of certain trigonometrical integrals}.
C. {R}. {D}ixi\`eme {C}ongr\`es {M}ath. {S}candinaves (1946), pp.~181--186, Jul. Gjellerups Forlag, Copenhagen, 1947.

\bibitem{Valero} O. Valero. \textit{Quotient normed cones.} Proc. Nat. Acad. Sci. India. Vol.~116 (2006), no.~2, pp.~175–-191.

\bibitem{Weinberger} H. F. Weinberger. \textit{An inequality with alternating signs}. Proc. Natl. Acad. Sci. USA. Vol.~38 (1952), pp.~611–-613.

\bibitem{Wright} E. M. Wright.  \textit{An inequality for convex functions.} Amer. Math. Monthly. Vol.~61 (1954), pp.~620–-622.

\bibitem{Zhuang} Y. D. Zhuang.  \textit{On inverses of the H\"{o}lder inequality.} J. Math. Anal. Appl. Vol.~161 (1991), no.~2, pp.~566–-575.
\end{thebibliography}
\end{document}